\newtheorem{theorem}{Theorem}
\newtheorem{proposition}[theorem]{Proposition}
\newtheorem{lemma}{Lemma}
\newtheorem{remark}{Remark}
\newtheorem{example}{Example}
\renewcommand{\epsilon}{\varepsilon}
\DeclareMathOperator{\Ima}{Im}
\DeclareMathOperator{\Ker}{Ker}
\def\Id{\text{\rm Id}}
\def\N{\mathbb{N}}
\def\R{\mathbb{R}}
\begin{document}

\title[Admissibility and polynomial dichotomies]{Admissibility and polynomial dichotomies for evolution families}
\begin{abstract}
For an arbitrary evolution family, we consider the notion of a
polynomial  dichotomy with respect to a family of norms and characterize it 
in terms of the admissibility property, that is, the existence
of a unique bounded solution for each bounded perturbation. In particular, by
considering a family of Lyapunov norms, we recover the notion of a (strong) nonuniform
polynomial dichotomy. As a nontrivial application of the characterization, we establish
the robustness of the notion of a strong  nonuniform  polynomial dichotomy 
 under sufficiently small linear perturbations. 
\end{abstract}
\begin{thanks}
{ D.D. was supported by the  Croatian Science Foundation under the project IP-2014-09-2285}
\end{thanks}
\keywords{polynomial dichotomies, admissibility, robustness}

\subjclass[2010]{Primary: 34D09, 37D25.}
\author{Davor Dragi\v cevi\'c}
\address{Department of Mathematics, University of Rijeka, Croatia}
\email{ddragicevic@math.uniri.hr}
\maketitle

\section{Introduction}
For a nonautonomous linear equation 
\[
x'=A(t)x \quad t\ge 1,
\]
on a Banach space defined by a continuous function $A(t)$  and, more generally,
for an arbitrary evolution family $T (t, s)$ on a Banach space, we consider the notion of a polynomial dichotomy  with respect to a family of norms.  Similar to the classical notion of an exponential dichotomy (essentially introduced by Perron~\cite{Pe}), the notion of a polynomial dichotomy 
requries  for the phase space to split into two complementary directions, called the stable and the unstable direction, such that the dynamics exhibits contraction along the stable direction and expansion along the unstable direction. However, in the case of polynomial dichotomies, the rates of contractions and expansion are polynomial. 
The main objective for considering the notion of a polynomial dichotomy \emph{with respect to a family of norms} is that it includes as  a particular case:
\begin{itemize}
\item the notions of a uniform polynomial stability and expansivity studied by Hai~\cite{Hai};
\item the notion of a nonuniform polynomial dichotomy introduced independently by Barreira and Valls~\cite{BV1} and Bento and Silva~\cite{BS1, BS2}.
\end{itemize}
We emphasize that the more general notion  of dichotomy (associated to a certain growth rate)  was introduced and studied in~\cite{BV}). These developments can be seen as a contribution to the line of the research initiated by Muldowney~\cite{M} and Naulin and Pinto~\cite{NP, NP1}, who were the first  to consider (uniform) dichotomies 
with non-exponential rates of contraction and expansion.

The main objective of the present paper is to establish results analogous to those obtained in~\cite{BDV3} but for polynomial dichotomies. More precisely, we show that the notion of a polynomial dichotomy with respect to a family of norms can  be (under  suitable additional requirement that the evolution exhibits polynomial bounded growth) completely characterized in terms of the appropriate admissibility property, that is, in terms of the existence of a unique bounded solution
$x$ of the equation
\begin{equation}
x(t)=T(t,\tau)x(\tau)+\int_\tau^t \frac 1 s T(t,s)y(s)\, ds, \quad t\ge \tau \ge 1, 
\end{equation}
for each bounded perturbation $y$. We refer to Section~\ref{MR} for details. 

We emphasize that the study of the admissibility in relation to \emph{exponential} dichotomies goes back to the landmark works of Perron~\cite{Pe} and Li~\cite{tali} but was first systemically studied in seminal works of Massera and Sch\"affer~\cite{MS1, MS} (see also Coppel~\cite{Co}). The  first results that deal  with the case of  infinite-dimensional dynamics are due to  Dalec$'$ki\u{\i} and  Kre\u\i{n}~\cite{DK} in the case of continuous time and by Henry~\cite{He} for noninvertible dynamics with discrete time. The case of exponential dichotomies on the half-line was first considered (in the infinite-dimensional case) in~\cite{Sx2}. For more recent results, we refer to the works of Huy~\cite{HH},  Latushkin,  Randolph and  Schnaubelt~\cite{LRS},  Preda, Pogan and Preda~\cite{PPP,PPP1} as well as Sasu and Sasu~\cite{SS1,SS2, SS3}. For results dealing with various flavours of nonuniform behaviour, we refer to~\cite{BDV1, BDV3, LP, MSS1, MP, SBS, ZZ, ZLZ2} and references therein. Finally, for a detailed exposition and additional references, we recommend~\cite{BDV2}.

To the best of our knowledge, the first contribution to the study of the relationship between admissibility and polynomial asymptotic behaviour is due to Hai~\cite{Hai}.  However, in~\cite{Hai} the author deals only with uniform polynomial stability and expansivity.  In particular, the case of dichotomies is not considered. 
Moreover, our admissibility spaces are different from those used in~\cite{Hai}.  More recently, the author has developed results similar to those in the present paper for discrete-time dynamics~\cite{DD}. Although the approach  in the present paper is similar to that in~\cite{DD}, we emphasize that it requires nontrivial changes.

The paper is organized as follows. In Section~\ref{P} we introduce the notion of a polynomial dichotomy with respect to a family  of norms. Then, in Section~\ref{MR} we obtain a complete characterization of this notion in terms of the appropriate admissibility property. Finally, in Sections~\ref{NPD} and~\ref{R} we apply our results to the study of nonuniform polynomial dichotomies.

\section{Preliminaries}\label{P}
Let $X=(X, \lVert \cdot \rVert)$ be a Banach space and let $B(X)$ denote the space of all bounded linear operators on $X$.   A family $T(t, \tau)$, $t\ge \tau \ge 1$, of bounded linear operators is said to be an \emph{evolution family} if
\begin{enumerate}
\item $T(t,t)=\Id$ for $t\ge 1$;
\item $T(t,s)T(s,\tau)=T(t, \tau)$ for $t\ge s\ge \tau \ge 1$;
\item given $\tau \ge 1$ and $x\in X$, the map $s\mapsto T(s,\tau)x$ is continuous on $[\tau, \infty)$.
\end{enumerate}
We also consider a family of norms  $\lVert \cdot \rVert_t$ for $t\ge 1$  on $X$ such that 
\begin{itemize}
\item[(i)] there exist $C>0$ and $\epsilon \ge 0$ such that $\lVert x\rVert \le \lVert x\rVert_\tau \le C\tau^\epsilon \lVert x\rVert$ for every $x\in X$ and $\tau \ge 1$;
\item[(ii)] the map $t\mapsto \lVert x\rVert_t$ is measurable for each $x\in X$.
\end{itemize}

We say that  an evolution family $T(t,\tau)$ admits a \emph{polynomial dichotomy} with respect to the family of norms $\lVert \cdot \rVert_t$ 
if:
\begin{enumerate}
\item there exist projections $P(t)$ for $t\ge 1$ satisfying
\begin{equation}\label{pro}
P(t)T(t, \tau)=T(t,\tau)P(\tau) \quad \text{for $t\ge \tau \ge 1$}
\end{equation}
such that the map  $T(t,\tau)\rvert_{\Ker P(\tau)} \colon \Ker P(\tau)  \to \Ker P(t)$ is invertible for all $t\ge \tau \ge 1$;
\item there exist $\lambda, D>0$ such that for every $x\in X$ and $t,\tau \ge 1$, we have 
\begin{equation}\label{d1}
\lVert T(t,\tau)P(\tau) x\rVert_t \le D(t/\tau)^{-\lambda} \lVert x\rVert_\tau \quad \text{for $t\ge \tau$}
\end{equation}
and 
\begin{equation}\label{d2}
\lVert T(t,\tau)Q(\tau) x \rVert_t \le  D(\tau /t)^{-\lambda}\lVert x\rVert_\tau \quad \text{for $t\le \tau$,}
\end{equation}
where $Q(t)=\Id-P(t)$ and
\[
T(t,\tau)=(T(\tau, t)\rvert_{\Ker P(t)})^{-1} \colon \Ker P(\tau) \to \Ker P(t)
\]
for $t<\tau$.
\end{enumerate}

We also introduce function spaces that will play a major role in our arguments. 
Let $Y$ be the set of all continuous functions $x\colon [1, \infty) \to X$  such that
\[
\lVert x\rVert_\infty:=\sup_{t\ge 1 }\lVert x(t)\rVert_t <\infty.
\]
It is easy to prove that $(Y, \lVert \cdot \rVert_\infty)$ is a Banach space. Furthermore, for a given closed subspace $Z\subset X$, let $Y_Z$ be the set of all $x \in Y$ such that $x(1)\in Z$. It is easy to verify that $Y_Z$ is a closed subspace of $Y$. 
We will write $Y_0$ instead of $Y_{\{0\}}$.

Furthermore, we  also consider the set $Y_1$ of all locally integrable functions $x\colon [1, \infty) \to X$ such that 
\[
\lVert x\rVert_L:=\sup_{t\ge 1}\int_t^{t+1}\lVert x(s)\rVert_s\, ds<\infty.
\]
Then one can easily  prove that $(Y_1, \lVert \cdot \rVert_L)$ is a Banach space.

\section{Main results}\label{MR}
The following  is our first main result. 
\begin{theorem}\label{t2}
Assume that the evolution family $T(t,\tau)$ admits a polynomial dichotomy with respect to the family of norms $\lVert \cdot \rVert_t$. Then, for each $y\in Y_1$, there exists a unique $x\in Y_Z$, with $Z=\Ima Q(1)$, such that
\begin{equation}\label{503a}
x(t)=T(t,\tau)x(\tau)+\int_\tau^t \frac{1}{s} T(t,s)y(s)\, ds.
\end{equation}
\end{theorem}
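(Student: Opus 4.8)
The plan is to construct the solution explicitly by a variation-of-constants (Green function) formula adapted to the two invariant directions, verify that it belongs to $Y_Z$, and then treat uniqueness separately. First I would define the candidate
\[
x(t)=\int_1^t \frac 1s T(t,s)P(s)y(s)\,ds-\int_t^\infty \frac 1s T(t,s)Q(s)y(s)\,ds,
\]
where in the second integral $T(t,s)$ for $t<s$ is understood as the inverse map on $\Ker P(s)$ furnished by the dichotomy. The first term propagates the contracting part forward and the second propagates the backward-contracting unstable part, so that each integrand decays.

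The key point, and the step I expect to be the main obstacle, is to show that this candidate is well defined and lies in $Y$. Combining the weight with the dichotomy rates gives the elementary identities $\frac 1s (t/s)^{-\lambda}=s^{\lambda-1}t^{-\lambda}$ and $\frac 1s (s/t)^{-\lambda}=t^\lambda s^{-\lambda-1}$, so by \eqref{d1} and \eqref{d2} one has
\[
\lVert x(t)\rVert_t\le D t^{-\lambda}\int_1^t s^{\lambda-1}\lVert y(s)\rVert_s\,ds+D t^\lambda\int_t^\infty s^{-\lambda-1}\lVert y(s)\rVert_s\,ds.
\]
Since $y$ is controlled only through the averaged quantity $\lVert y\rVert_L$ rather than pointwise, I would estimate each integral by decomposing $[1,\infty)$ into the unit intervals $[k,k+1]$, bounding $s^{\lambda-1}$ (resp.\ $s^{-\lambda-1}$) by its value at an endpoint and using $\int_k^{k+1}\lVert y(s)\rVert_s\,ds\le\lVert y\rVert_L$. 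This reduces the two terms to $\lVert y\rVert_L$ times $t^{-\lambda}\sum_{k\le t}k^{\lambda-1}$ and $t^\lambda\sum_{k\ge t}k^{-\lambda-1}$, both of which are bounded uniformly in $t$ because $\lambda>0$ (by comparison with the integrals of $x^{\lambda-1}$ and $x^{-\lambda-1}$). Hence $\lVert x\rVert_\infty<\infty$; continuity of $t\mapsto x(t)$ as an $X$-valued map then follows from property (3) of the evolution family together with dominated convergence, so $x\in Y$.

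Next I would verify \eqref{503a}. Applying the bounded operator $T(t,\tau)$ under the integral sign and using the cocycle identity $T(t,\tau)T(\tau,s)=T(t,s)$, valid on $\Ima P$ for $s\le\tau\le t$ and, by invertibility, on $\Ker P$ for arbitrary arguments, together with the intertwining \eqref{pro}, gives
\[
T(t,\tau)x(\tau)=\int_1^\tau \frac 1s T(t,s)P(s)y(s)\,ds-\int_\tau^\infty \frac 1s T(t,s)Q(s)y(s)\,ds.
\]
Subtracting this from $x(t)$, the remaining integrals combine over $[\tau,t]$ and, since $P(s)+Q(s)=\Id$, reproduce exactly the right-hand side of \eqref{503a}. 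Setting $t=1$ leaves $x(1)=-\int_1^\infty \frac 1s T(1,s)Q(s)y(s)\,ds$, which lies in the closed subspace $\Ima Q(1)$ because $T(1,s)Q(s)=Q(1)T(1,s)$ by \eqref{pro}; thus $x\in Y_Z$.

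Finally, for uniqueness, if $x_1,x_2\in Y_Z$ both satisfy \eqref{503a} then $z=x_1-x_2$ obeys the homogeneous relation $z(t)=T(t,\tau)z(\tau)$, so $z(t)=T(t,1)z(1)$ with $z(1)\in\Ima Q(1)=\Ker P(1)$. Applying \eqref{d2} in the invertible (unstable) direction yields
\[
\lVert z(1)\rVert_1=\lVert T(1,t)z(t)\rVert_1\le D t^{-\lambda}\lVert z(t)\rVert_t\le D t^{-\lambda}\lVert z\rVert_\infty,
\]
and the right-hand side tends to $0$ as $t\to\infty$; hence $z(1)=0$ and therefore $z\equiv 0$, which establishes uniqueness.
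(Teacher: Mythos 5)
Your proposal is correct and takes essentially the same route as the paper's own proof: the same explicit Green-function candidate (forward integral of the stable part minus backward integral of the unstable part), the same decomposition into unit intervals to convert the dichotomy decay plus the averaged norm $\lVert y\rVert_L$ into a uniform bound, the same algebraic verification of \eqref{503a} and of $x(1)\in\Ima Q(1)$, and the identical uniqueness argument via \eqref{d2} letting $t\to\infty$. The only cosmetic difference is that the paper first normalizes $\lambda\in(0,1)$ to fix which endpoint dominates in the interval estimates, a point your endpoint-by-cases remark handles equally well.
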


\begin{proof}
  Observe that without any loss of generality we can assume that~\eqref{d1} and~\eqref{d2} hold with $\lambda \in (0, 1)$.
 Choose $y\in Y_1$ and extend it to the function $y\colon \R \to X$  by $y(t)=0$ for $t<1$. Let us prove that there exists $x\in Y_Z$ such that~\eqref{503a} holds.  For each $t\ge 1$, let
\[
x_1(t)=\int_t^\infty \frac{1}{\tau} T(t,\tau)Q(\tau)y(\tau)\, d\tau 
\]
and
\[
x_2(t)=\int_1^t \frac{1}{\tau}T(t, \tau)P(\tau)y(\tau)\, d\tau.
\]
By applying~\eqref{d2}, we obtain that 
\[
\begin{split}
\lVert x_1(t)\rVert_t &\le \int_t^\infty \frac{1}{\tau} \lVert  T(t,\tau)Q(\tau)y(\tau)\rVert_t \, d\tau \\
&\le D\int_t^\infty \frac{1}{\tau}(\tau/t)^{-\lambda} \lVert y(\tau)\rVert_\tau\, d\tau \\
&=D\sum_{m=0}^\infty \int_{t+m}^{t+m+1}\frac{1}{\tau}(\tau/t)^{-\lambda} \lVert y(\tau)\rVert_\tau\, d\tau \\
&\le D \sum_{m=0}^\infty \frac{t^\lambda}{(t+m)^{1+\lambda}}\int_{t+m}^{t+m+1} \lVert y(\tau)\rVert_\tau\, d\tau \\
&\le Dt^\lambda \bigg{(}\frac{1}{t^{1+\lambda}}+\int_0^\infty \frac{ds}{(t+s)^{1+\lambda}} \bigg{)}\lVert y\rVert_L \\
&\le D(1+\lambda^{-1})\lVert y\rVert_L,
\end{split}
\]
for $t\ge 1$.  Similarly, it follows from~\eqref{d1} that 
\[
\begin{split}
\lVert x_2(t)\rVert_t &\le \int_1^t \frac{1}{\tau}\lVert T(t,\tau)P(\tau)y(\tau)\rVert_t\, d\tau \\
&\le D\sum_{m=0}^{\lceil t\rceil -3}\int_{t-m-1}^{t-m}\frac{1}{\tau} (t/\tau)^{-\lambda} \lVert  y(\tau)\rVert_\tau\, d\tau +D \int_1^{t-\lceil t\rceil +2} \frac{1}{\tau} (t/\tau)^{-\lambda} \lVert  y(\tau)\rVert_\tau\, d\tau\\
&\le Dt^{-\lambda}\sum_{m=0}^{\lceil t\rceil -3}\frac{1}{(t-m-1)^{1-\lambda}}\int_{t-m-1}^{t-m}\lVert  y(\tau)\rVert_\tau\, d\tau  +Dt^{-\lambda}\lVert y\rVert_L \\
&\le  Dt^{-\lambda}\int_0^{\lceil t\rceil -2}\frac{ds}{(t-s-1)^{1-\lambda}}\lVert y\rVert_L+D\lVert y\rVert_L \\
&\le D(1+\lambda^{-1})\lVert y\rVert_L,
\end{split}
\]
for $t> 2$. Observe also that the above inequality trivially holds for $t\in [1, 2]$. Set $x(t)=x_2(t)-x_1(t)$, $t\ge 1$. Obviously,  it follows from the estimates above that  $\sup_{t\ge 1} \lVert x(t)\rVert_t <\infty$ and $x$ is continuous.  Moreover, for $t\ge \tau \ge 1$ we have that 
\[
\begin{split}
x(t) &=\int_\tau^t \frac 1 s T(t,s)y(s)\, ds-\int_\tau^t \frac 1 s T(t,s)P(s) y(s)\, ds \\
&\phantom{=}-\int_\tau^t \frac 1 s T(t,s)Q(s) y(s)\, ds+\int_1^t  \frac 1 s T(t,s)P(s)y(s)\, ds \\
&\phantom{=}-\int_t^\infty \frac 1 s   T(t,s)Q(s)y(s)\, ds \\
&=\int_\tau^t \frac 1 s T(t,s)y(s)\, ds+\int_1^\tau \frac 1 s T(t,s)P(s)y(s)\, ds \\
&\phantom{=}-\int_\tau^\infty \frac 1 s   T(t,s)Q(s)y(s)\, ds \\
&=T(t,\tau)x(\tau)+\int_\tau^t \frac 1 s T(t,s)y(s)\, ds,
\end{split}
\]
and thus~\eqref{503a} holds. Moreover, $P(1)x(1)=0$ and thus $x(1)\in Z$. Consequently, $x\in Y_Z$.

Now we establish the uniqueness of x. It suffices to show that if $x(t)=T(t, \tau)x(\tau)$  for $t\ge \tau \ge 1$ with $x\in Y_Z$, then $x(t)=0$ for $t\ge 1$. It follows from~\eqref{d2} that 
\[
\lVert Q(1)x(1)\rVert_1=\lVert T(1,t)Q(t)x(t)\rVert_1 \le Dt^{-\lambda}\lVert x(t)\rVert_t \le Dt^{-\lambda} \lVert x\rVert_\infty
\]
for $t\ge 1$. Letting $t\to \infty$, we obtain that $x(1)=Q(1)x(1)=0$, and thus $x=0$. The proof of the theorem is completed.

\end{proof}

Let us now establish a partial  converse of Theorem~\ref{t2}.
\begin{theorem}\label{t1}
Assume that there exists a closed subspace $Z\subset X$  such that for each $y\in Y_1$ there exists a unique $x\in Y_Z$ satisfying~\eqref{503a}. 
 Furthermore, suppose that there exist $M, a>0$ such that
\begin{equation}\label{pb}
\lVert T(t,s)x\rVert_t \le M (t/s)^a \lVert x\rVert_s \quad \text{for $t\ge s\ge 1$ and $x\in X$.}
\end{equation}
 Then,  $T(t,\tau)$  admits a polynomial dichotomy with respect to the family  of norms $\lVert \cdot \rVert_t$. 
\end{theorem}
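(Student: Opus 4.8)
The plan is to first make the admissibility hypothesis quantitative and then read off the projections and both dichotomy estimates from carefully chosen test perturbations. I would let $\mathcal Ty=x$ denote the map sending $y\in Y_1$ to the unique solution $x\in Y_Z$ of~\eqref{503a}; uniqueness makes $\mathcal T$ linear and the closed graph theorem makes it bounded, so there is $K>0$ with $\lVert\mathcal Ty\rVert_\infty\le K\lVert y\rVert_L$. Writing $\mathcal S(\tau)=\{v\in X:\sup_{t\ge\tau}\lVert T(t,\tau)v\rVert_t<\infty\}$ for the stable set, uniqueness applied to $y=0$ gives $\mathcal S(1)\cap Z=\{0\}$. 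For surjectivity I would feed in $y(s)=\chi_{[1,e]}(s)T(s,1)\xi$ for $\xi\in X$; since $\int_1^e s^{-1}\,ds=1$ this gives $\int_1^t s^{-1} T(t,s)y(s)\,ds=T(t,1)\xi$ for $t\ge e$, so $x=\mathcal Ty$ satisfies $x(t)=T(t,1)(x(1)+\xi)$ there, whence $x(1)+\xi\in\mathcal S(1)$ and $\xi=(x(1)+\xi)-x(1)$ with $x(1)\in Z$. Thus $X=\mathcal S(1)\oplus Z$, and because $\lVert x(1)\rVert_1\le K\lVert y\rVert_L\le KMe^{a}\lVert\xi\rVert_1$ by~\eqref{pb}, the associated projections are bounded.

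Running the same construction from an arbitrary $\tau$, with $y(s)=\chi_{[\tau,\tau e]}(s)T(s,\tau)\xi$, I would obtain $X=\mathcal S(\tau)\oplus T(\tau,1)Z$; the point is that this input, supported on $[\tau,\tau e]$, forces $x(\tau)=T(\tau,1)x(1)\in T(\tau,1)Z$, while~\eqref{pb} keeps $\lVert y\rVert_L\le Me^{a}\lVert\xi\rVert_\tau$, so the projection $P(\tau)$ onto $\mathcal S(\tau)$ along $T(\tau,1)Z$ is bounded \emph{uniformly} in $\tau$. Injectivity of $T(\tau,1)|_Z$ and $\mathcal S(\tau)\cap T(\tau,1)Z=\{0\}$ both reduce to $\mathcal S(1)\cap Z=\{0\}$. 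Since $T(t,\tau)$ sends $\mathcal S(\tau)$ into $\mathcal S(t)$ and $T(\tau,1)Z$ onto $T(t,1)Z$, it preserves the splitting, which yields~\eqref{pro} and shows $T(t,\tau)\colon\Ker P(\tau)\to\Ker P(t)$ is invertible.

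Both estimates flow from one scalar trick: if $z(r)=T(r,\tau)v$ is bounded and $\phi\ge0$ is supported in $[\tau,\infty)$, then $x(r)=\psi(r)z(r)$ with $\psi(r)=\int_1^r s^{-1}\phi(s)\,ds$ solves~\eqref{503a} for $y=\phi z$. Taking $\phi=\chi_{[\tau,t]}$ gives $\psi(t)=\log(t/\tau)$ and $\lVert y\rVert_L\le\sup_{s\ge\tau}\lVert z(s)\rVert_s$, so the bound on $\mathcal T$ yields the logarithmic inequality $\log(t/\tau)\lVert T(t,\tau)v\rVert_t\le K\sup_{s\ge\tau}\lVert T(s,\tau)v\rVert_s$ for $v\in\mathcal S(\tau)$. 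Combining this with~\eqref{pb} on the window $[\tau,\tau e^{2K}]$ forces $\sup_{s\ge\tau}\lVert T(s,\tau)v\rVert_s\le N\lVert v\rVert_\tau$ with $N=Me^{2Ka}$ uniform, and feeding it back gives $\log(t/\tau)\lVert T(t,\tau)v\rVert_t\le KN\lVert v\rVert_\tau$. A bootstrap over geometric times $t=\tau\rho^{n}$ (with $\rho$ chosen from $KN$) upgrades logarithmic to polynomial decay, and~\eqref{pb} fills the gaps; the uniform bound on $P(\tau)$ then promotes this to~\eqref{d1} for all $x$.

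The delicate point, and the main obstacle, is~\eqref{d2}: on $\Ker P(\tau)$ the orbit $\eta(r)=T(r,\tau)w$ grows forward and the hypotheses provide \emph{no} backward growth bound, so the window argument above seems circular. I would resolve this by placing the test perturbation in the \emph{future} of $\tau$: with $\phi=\chi_{[\tau,\tau e]}$ and $\psi(r)=-\int_r^\infty s^{-1}\phi(s)\,ds$, the function $x(r)=\psi(r)\eta(r)$ lies in $Y_Z$ (it vanishes for $r\ge\tau e$, equals $-\eta(r)$ for $r\le\tau$, and $\eta(1)\in Z$) and solves~\eqref{503a} for $y=\phi\eta$. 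Since $\psi\equiv-1$ on $[1,\tau]$ while $\lVert y\rVert_L\le Me^{a}\lVert w\rVert_\tau$ by~\eqref{pb}, this delivers at one stroke the uniform backward bound $\sup_{1\le s\le\tau}\lVert T(s,\tau)w\rVert_s\le KMe^{a}\lVert w\rVert_\tau$. The same scalar trick with $\phi=\chi_{[t,\tau]}$ gives $\log(\tau/t)\lVert T(t,\tau)w\rVert_t\le K\sup_{t\le s\le\tau}\lVert\eta\rVert_s$, and combining with the backward bound and bootstrapping as before yields~\eqref{d2}, first on $\Ker P(\tau)$ and then, via the uniform bound on $Q(\tau)$, for all $x$, completing the verification of the dichotomy.
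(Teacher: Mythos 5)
Your proposal is correct, and its skeleton matches the paper's: admissibility is turned into a bounded solution operator via the closed graph theorem (you should still record the routine verification that the graph is closed, i.e.\ that one can pass to the limit in \eqref{503a}; the paper isolates this as a lemma), the splitting is the same $X=\mathcal S(\tau)\oplus T(\tau,1)Z$ produced by a test perturbation supported just after $\tau$, and polynomial rates are obtained by iterating a fixed-ratio contraction/expansion and filling gaps with \eqref{pb}. Where you genuinely depart from the paper is in two key lemmas, and in both cases your route is arguably cleaner. First, the uniform bound on the projections: the paper proves $\sup_\tau\lVert P(\tau)\rVert<\infty$ \emph{last}, via the angle $\gamma(\tau)$ between the stable and unstable subspaces (citing \cite[Lemma 4.2]{Sx2}) and using the already-established rates \eqref{201}--\eqref{202} together with \eqref{pb}; you instead read the bound $\lVert Q(\tau)\xi\rVert_\tau=\lVert x(\tau)\rVert_\tau\le K\lVert y\rVert_L\le KMe^{a}\lVert\xi\rVert_\tau$ directly off the construction of the splitting, which eliminates that lemma and the external citation. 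Second, the unstable estimate \eqref{d2}: the paper tests with the pointwise-normalized orbit $y(s)=-T(s,1)z/\lVert T(s,1)z\rVert_s$ supported on $[1,\tau]$ and extracts the backward bound from the resulting harmonic integral restricted to a window $[t,2t]$ plus \eqref{pb}; your trick of placing $\phi=\chi_{[\tau,\tau e]}$ in the \emph{future} with $\psi(r)=-\int_r^\infty s^{-1}\phi(s)\,ds$, so that the solution equals $-\eta$ on all of $[1,\tau]$, delivers the uniform backward bound in one stroke and avoids dividing by $\lVert T(s,1)z\rVert_s$ (hence avoids invoking nonvanishing of orbits at that stage). Relatedly, your unnormalized ``log inequality'' $\log(t/\tau)\lVert T(t,\tau)v\rVert_t\le K\sup_{s\ge\tau}\lVert T(s,\tau)v\rVert_s$ is pointwise weaker than the paper's harmonic-mean inequality \eqref{1245}, but as you show it suffices once combined with \eqref{pb} on a fixed window and the finiteness of the supremum; the subsequent bootstrap over geometric times is the same as the paper's $N_0$-step iteration. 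Both arguments then conclude identically, converting the subspace rates into \eqref{d1}--\eqref{d2} for all of $X$ via the uniform projection bounds.
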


\begin{proof}
Let $T_Z$ be the linear operator defined by $T_Z x=y$ on the domain $\mathcal{D}(T_Z)$ formed by all $x\in Y_Z$ for which there exists $y\in Y_1$ such that~\eqref{503a} holds. 
It is easy to verify that $T_Z$ is well-defined.  Indeed, assume that $x\in Y_Z$ and $y_1, y_2\in Y_1$ are such that 
\[
x(t)=T(t,\tau)x(\tau)+\int_\tau^t \frac 1 s T(t,s)y_i(s)\, ds,
\]
for $t\ge \tau \ge 1$ and $i\in \{1, 2\}$. Hence,
\[
\int_\tau^t \frac 1 s T(t,s)(y_1(s)-y_2(s))\, ds=0,
\]
for $t> \tau \ge 1$. Dividing by $t-\tau$ and letting $t-\tau \to 0$, we obtain that
\[
y_1(t)=y_2(t) \quad \text{for a.e. $t\ge 1$.}
\]
We conclude that  $y_1=y_2$ and thus $T_Z$ is well-defined. 
\begin{lemma}\label{9:26}
The operator $T_Z \colon \mathcal D(T_Z) \to Y_1$ is closed. 
\end{lemma}

\begin{proof}[Proof of the lemma]
Let $(x_n)_{n\in \N}$ be the sequence in $\mathcal D(T_Z)$ converging to $x\in Y_Z$ such that $y_n=T_Z x_n$ converges to $y\in Y_1$. Then for $t\ge \tau \ge 1$, we have that 
\[
\begin{split}
x(t)-T(t,\tau)x(\tau) &=\lim_{n\to \infty} (x_n(t)-T(t,\tau)x_n(\tau))\\
&=\lim_{n\to \infty}\int_\tau^t  \frac 1 s T(t,s)y_n(s)\, ds.
\end{split}
\]
On the other hand,  we have 
\[
\begin{split}
\bigg{\lVert}\int_\tau^t  \frac 1 s T(t,s)y_n(s)\, ds-\int_\tau^t  \frac 1 s T(t,s)y(s)\, ds \bigg{\rVert} &\le M\int_\tau^t \lVert y_n(s)-y(s)\rVert\, ds \\
&\le M\int_\tau^t \lVert y_n(s)-y(s)\rVert_s \, ds \\
&\le M(t-\tau+1)\lVert y_n-y\rVert_L,
\end{split}
\]
where $M=\sup \{\lVert T(t,s)\rVert: s\in [\tau, t]\}$ is finite by the Banach-Steinhaus theorem.  Since $y_n \to y$ in $Y_1$, we conclude that
\[
\lim_{n\to \infty} \int_\tau^t  \frac 1 s T(t,s)y_n(s)\, ds=\int_\tau^t \frac 1 s T(t,s)y(s)\, ds,
\]
and therefore~\eqref{503a} holds. We conclude that $x\in \mathcal D(T_Z)$ and $T_Zx=y$. This completes the proof of the lemma. 
\end{proof}
It follows from the assumptions of the theorem that $T_Z$ is bijective. Hence, by Lemma~\ref{9:26} and the Closed Graph Theorem, we conclude that $T_Z$ has a bounded inverse $G_Z\colon Y_1 \to Y_Z$. For $\tau \ge 1$, we define
\[
S(\tau):=\bigg{\{}x\in X: \sup_{t\ge \tau}\lVert T(t, \tau)x\rVert_t <\infty \bigg{ \} } \quad \text{and} \quad U(\tau)=T(\tau, 1)Z.
\]
Observe that $S(\tau)$ and $U(\tau)$ are subspaces of $X$.

\begin{lemma}
For $\tau \ge 1$, we have that
\begin{equation}\label{1016}
X=S(\tau)\oplus U(\tau).
\end{equation}

\end{lemma}

\begin{proof}[Proof of the lemma]
Take $x\in X$ and $\tau \ge 1$ and set $\alpha (\tau):=\ln (1+\tau^{-1})$.
 We define $g\colon [1, \infty)\to X$ by $g(s)=\chi_{[\tau, \tau+1]}(s)T(s, \tau)x$, $s\ge 1$. Clearly, $g\in Y_1$. 
Since $T_Z$ is bijective, there exists $v\in Y_Z$ such that $T_Zv=g$. It follows from~\eqref{503a} that 
$v(t)=T(t,\tau)(v(\tau)+\alpha (\tau)x)$ for $t\ge \tau+1$. Since $v\in Y_\infty$, we have that 
$v(\tau)+\alpha (\tau)x \in S(\tau)$. On the other hand, \eqref{503a}also  implies that 
$v(\tau)=T(\tau, 1)v(1)$. Since $v\in Y_Z$, we have that $v(1)\in Z$ and thus $v(\tau)\in U(\tau)$. Consequently, 
\[
x=\frac{1}{\alpha(\tau)}(v(\tau)+\alpha (\tau)x)-\frac{1}{\alpha(\tau)} v(\tau)\in S(\tau)+U(\tau).
\]

Now take $x\in S(\tau)\cap U(\tau)$ and choose $z\in Z$ such that $x=T(\tau, 1)z$. We define $u\colon [1, \infty) \to X$ by
$u(t)=T(t,1)z$, $t\ge 1$. Obviously $u\in Y_Z$ and $T_Zu=0$.  Since $T_Z$ is bijective, we have that $u=0$ and thus $x=u(\tau)=0$. We conclude that
$S(\tau)\cap U(\tau)=\{0\}$ and therefore~\eqref{1016} holds. 
\end{proof}

Let $P(\tau)\colon X \to S(\tau)$  and $Q(\tau)\colon X\to U(\tau)$ be the projections associated with the decomposition~\eqref{1016}, with $P(\tau)+Q(\tau)=\Id$.  Observe that~\eqref{pro} holds. 

\begin{lemma}
For $t\ge \tau \ge 1$, the map $T(t,\tau)\rvert_{U(\tau)} \colon U(\tau)\to U(t)$ is invertible.

\end{lemma}

\begin{proof}
Take $x\in U(t)$. Then, there exists $z\in X$ such that $x=T(t,1)z$. Since $T(\tau, 1)z\in U(\tau)$ and $x=T(t, \tau)T(\tau, 1)z$, we conclude that $T(t,\tau)\rvert_{U(\tau)}$ is surjective. 

Assume now that $T(t,\tau)x=0$ for some $x\in U(\tau)$. Take $z\in Z$ such that $x=T(\tau, 1)z$. We define $u\colon [1, \infty)\to X$ by $u(s)=T(s,1)z$, $s\ge 1$.
Since $u(s)=0$ for $s\ge t$ and thus $u\in Y_Z$.  Moreover, $T_Zu=0$ and consequently $u=0$ and $x=u(\tau)=0$. This proves that $T(t,\tau)\rvert_{U(\tau)}$ is also injective.

\end{proof}

We now show that all elements of $S(\tau)$ uniformly polynomially  contract under the action of the evolution family $T(t,\tau)$.

\begin{lemma}
There exist $D, \lambda >0$ such that
\begin{equation}\label{201}
\lVert T(t, \tau)x\rVert_t \le D(t/\tau)^{-\lambda}\lVert x\rVert_\tau, \quad \text{for $t\ge \tau \ge 1$ and $x\in S(\tau)$.}
\end{equation}
\end{lemma}

\begin{proof}[Proof of the lemma]
We first claim that there exists $L>0$ such that 
\begin{equation}\label{o}
\lVert T(t, \tau)v\rVert_t \le L\lVert v\rVert_\tau \quad \text{for $t\ge \tau \ge 1$ and $v\in S(\tau)$.}
\end{equation}
Let us first consider the case when $t\ge 2\tau$ and take $v\in X$ such that $T(t, \tau)v\neq 0$.  Consequently, $T(s,\tau)v\neq 0$ for $\tau \le s \le t$. Let us consider $x, y\colon [1, \infty) \to X$  defined by 
\[
y(s)=\begin{cases}
0 & \text{if $1\le s \le \tau$,} \\
\frac{T(s,\tau) v}{\lVert T(s,\tau)v\rVert_s} & \text{if $\tau < s\le  t$,}\\
0 & \text{if $s>t$,}
\end{cases}
\]
and 
\[
x(s)=\begin{cases}
0 & \text{if $1\le s \le \tau$,} \\
\int_\tau^s  \frac{T(s, \tau)v}{r\lVert T(r, \tau)v\rVert_r} \, dr & \text{if $\tau < s \le t$,}\\
\int_\tau^t  \frac{T(s, \tau)v}{r\lVert T(r, \tau)v\rVert_r}\, dr & \text{if $s>t$.}
\end{cases}
\]
Note that $y\in Y_1$. Furthermore, since $v\in S(\tau)$ we have that $x\in Y_Z$. It is  straightforward to verify that $T_Zx=y$. Consequently,
\[
\lVert x\rVert_\infty =\lVert G_Z y\rVert_\infty \le \lVert G_Z\rVert \cdot \lVert y\rVert_L \le \lVert G_Z\rVert.
\]
Therefore, 
\begin{equation}\label{1245}
\lVert G_Z\rVert \ge \lVert x\rVert_\infty \ge \lVert x(t)\rVert_t = \lVert T(t, \tau)v\rVert_t \int_\tau^t \frac{1}{r\lVert T(r, \tau)v\rVert_r}\, dr,
\end{equation}
and thus 
\[
\lVert G_Z \rVert \ge \lVert T(t, \tau)v\rVert_t \int_\tau^{2\tau} \frac{1}{r\lVert T(r, \tau)v\rVert_r}\, dr.
\]
On the other hand, \eqref{pb} implies that
\[
\lVert T(r, \tau)v\rVert_r \le M(r/\tau)^a \lVert v\rVert_\tau \le M2^a\lVert v\rVert_\tau, \quad \text{for $\tau \le r\le 2\tau$.}
\]
Hence, 
\[
\lVert G_Z\rVert \ge \frac{\lVert T(t, \tau)v\rVert_t}{M2^a\lVert v\rVert_\tau} \int_\tau^{2\tau} \frac 1 r\, dr \ge \ln 2   \frac{\lVert T(t, \tau)v\rVert_t}{M2^{a}\lVert v\rVert_\tau},
\]
which yields 
\begin{equation}\label{x1}
\lVert T(t, \tau)v\rVert_t \le \frac{ M2^{a}\lVert G_Z\rVert}{\ln 2} \cdot \lVert v\rVert_\tau.
\end{equation}
Moreover, \eqref{pb} implies that 
\begin{equation}\label{x2}
\lVert T(t, \tau)v\rVert_t  \le M2^a \lVert v\rVert_\tau \quad \text{for $\tau \le t\le 2\tau$ and $v\in X$.}
\end{equation}
By~\eqref{x1} and~\eqref{x2}, we conclude that~\eqref{o} holds with
\[
L:=\max \bigg{\{} M2^a, \frac{M2^{a}\lVert G_Z\rVert }{\ln 2} \bigg{\}}>0. 
\]
We next show that there exists $N_0\in \N$  such that
\begin{equation}\label{1257}
\lVert T(t, \tau)v\rVert_t \le e^{-1} \lVert v\rVert_\tau \quad \text{for $t\ge N_0\tau$ and $v\in X$.}
\end{equation}
We have (using~\eqref{o} and~\eqref{1245}) that 
\[
\begin{split}
\lVert G_Z\rVert &\ge \lVert T(t, \tau)v\rVert_t \int_\tau^t \frac{1}{r\lVert T(r, \tau)v\rVert_r}\, dr \\
&\ge  \lVert T(t, \tau)v\rVert_t \int_\tau^t \frac{1}{rL \lVert v\rVert_\tau}\, dr \\
&\ge  \lVert T(t, \tau)v\rVert_t \int_\tau^{N_0\tau} \frac{1}{rL \lVert v\rVert_\tau}\, dr \\
&\ge \ln N_0\frac{\lVert T(t, \tau)v\rVert_t }{L\lVert v\rVert_\tau}
\end{split}
\]
and therefore
\[
\lVert T(t, \tau)v\rVert_t \le \frac{L\lVert G_Z \rVert}{\ln N_0}\lVert v\rVert_\tau.
\]
Hence, if choose $N_0$ large enough so that
\[
\frac{L\lVert T_Z^{-1}\rVert}{\ln N_0}\le e^{-1},
\]
we conclude that~\eqref{1257} holds. 

Take now arbitrary $t\ge \tau$, $v\in S(\tau)$  and choose largest $l\in \N\cup \{0\}$ such that $N_0^l \le t/\tau$.  It follows from~\eqref{o} and~\eqref{1257} that 
\[
\lVert T(t,\tau)v\rVert_t =\lVert T(t, N_0^l \tau)T(N_0^l \tau, \tau)v\rVert_t \le Le^{-l}\lVert v\rVert_\tau.
\]
Since $t/\tau <N_0^{l+1}$, we have that
\[
l> \frac{\ln t/\tau}{\ln N_0}-1,
\]
and thus
\[
e^{-l}\le e(t/\tau)^{-1/\log N_0}.
\]
Consequently, 
\[
\lVert  T(t, \tau)v\rVert_t  \le Le (t/\tau)^{-1/\log N_0} \lVert v\rVert_\tau, 
\]
and we conclude that~\eqref{201} holds with
\[
D=Le \quad \text{and} \quad \lambda=1/\log N_0.
\]
The proof of the lemma is completed. 
\end{proof}
Next we show that nonzero vectors in $Z(\tau)$ exhibit uniform polynomial expansion under the action of the evolution family $T(t,\tau)$.
\begin{lemma}
There exist $D, \lambda >0$ such that
\begin{equation}\label{202}
\lVert T(t, \tau)v\rVert_t \le  D (\tau/t)^{-\lambda}\lVert v\rVert_\tau, \quad \text{for $t\le \tau$ and $v\in U(\tau)$.}
\end{equation}
\end{lemma}

\begin{proof}[Proof of the lemma]
Take $z\in Z\setminus \{0\}$ and $\tau \ge 1$. We consider $x, y\colon [1, \infty) \to X$ defined by
\[
y(s)=\begin{cases}
-\frac{T(s, 1)z}{\lVert T(s, 1)z\rVert_s} & \text{if $1\le s \le \tau$;}\\
0 & \text{if $s>\tau$,}
\end{cases} 
\]
and 
\[
x(s)=\begin{cases}
\int_s^\tau \frac{T(s, 1)z}{r\lVert T(r, 1)z\rVert_r} & \text{if $1\le s\le \tau$;}\\
0 & \text{if $s>\tau$.}
\end{cases}
\]
Observe that $y\in Y_1$ and $x\in Y_Z$. Furthermore, it is straightforward to verify that $T_Z x=y$. Hence,
\[
\lVert x\rVert_\infty =\lVert G_Z y\rVert_\infty  \le \lVert G_Z\rVert \cdot \lVert y\rVert_L=\lVert G_Z\rVert.
\]
Therefore, for each $1\le s\le \tau$, we have that
\[
\lVert G_Z\rVert \ge \lVert T(s, 1)z\rVert_s \int_s^\tau \frac{1}{r\lVert T(r, 1)z\rVert_r}\, dr.
\]
Letting $\tau\to \infty$, we conclude that 
\begin{equation}\label{re}
\lVert G_Z\rVert \ge \lVert T(s, 1)z\rVert_s \int_s^\infty  \frac{1}{r\lVert T(r, 1)z\rVert_r}\, dr,
\end{equation}
for each $s\ge 1$ and $z\in Z\setminus \{0\}$.
We now claim that there exists $L>0$ such that 
\begin{equation}\label{215}
\lVert T(t, 1)z\rVert_t \ge L\lVert T(\tau, 1)z\rVert_\tau \quad \text{for $t\ge \tau$ and $z\in Z$.}
\end{equation}
Using~\eqref{pb} and~\eqref{re}, we have that 
\[
\begin{split}
\frac{1}{\lVert T(\tau,1)z\rVert_s} &\ge \frac{1}{\lVert G_Z\rVert }\int_\tau^\infty \frac{1}{r\lVert T(r, 1)z\rVert_r} \, dr\\
&\ge \frac{1}{\lVert G_Z\rVert }\int_t^{2t} \frac{1}{r\lVert T(r, 1)z\rVert_r} \, dr \\
&= \frac{1}{\lVert G_Z\rVert }\int_t^{2t} \frac{1}{r\lVert T(r, t)T(t,1) z\rVert_r} \, dr  \\
&\ge   \frac{1}{\lVert G_Z\rVert }\int_t^{2t} \frac{1}{r M(r/t)^a \lVert T(t, 1)z\rVert_t}\, dr \\
&\ge  \frac{1}{M2^a \lVert G_Z\rVert \cdot \lVert T(t, 1)z\rVert_t }\int_{t}^{2t}  \frac 1 r \, dr \\
&=\frac{\ln 2}{M2^a \lVert G_Z\rVert \cdot \lVert T(t, 1)z\rVert_t },
\end{split}
\]
which readily implies that~\eqref{215} holds with 
\[
L=\frac{\ln 2}{M2^{a} \lVert G_Z\rVert}. 
\]
We next claim that there exists $N_0\in \N$ such that 
\begin{equation}\label{239}
\lVert T(t, 1)z\rVert_t \ge e\lVert T(\tau, 1)z\rVert_\tau \quad \text{for $t\ge N_0\tau$ and $z\in Z$.}
\end{equation}
Indeed, it follows from~\eqref{re} and~\eqref{215} that 
\[
\begin{split}
\frac{1}{\lVert T(\tau,1)z\rVert_\tau} &\ge \frac{1}{\lVert G_Z\rVert }\int_\tau^\infty \frac{1}{r\lVert T(r, 1)z\rVert_r}\, dr \\
&\ge \frac{1}{\lVert G_Z\rVert }\int_{\tau}^{N_0\tau}\frac{1}{r\lVert T(r, 1)z\rVert_r}\, dr \\
&\ge \frac{L}{\lVert G_Z\rVert \cdot \lVert T(t, 1)z\rVert_t}\int_{\tau}^{N_0\tau} \frac 1 r\, dr \\
&\ge \frac{L \ln N_0}{\lVert G_Z\rVert \cdot \lVert T(t, 1)z\rVert_t}.
\end{split}
\]
Hence, if we choose $N_0$ such that
\[
\frac{L \ln  N_0 }{\lVert G_Z\rVert}\ge e, 
\]
we have that~\eqref{239} holds. Proceeding as in the proof of the previous lemma, one can easily conclude that there exist $D, \lambda>0$ such that
\begin{equation}\label{m}
\lVert T(t, 1)z\rVert_t \ge \frac 1 D (t/\tau)^\lambda \lVert T(\tau, 1)z\rVert_\tau \quad \text{for $t\ge \tau$ and $z\in Z$.}
\end{equation}
Take now $t\ge \tau$, $v\in U(\tau)$ and choose $z\in Z$ such that $v=T(\tau, 1)z$. Then, \eqref{m} implies  that 
\[
\begin{split}
\lVert T(t, \tau)v\rVert_t &=\lVert T(t, 1)z\rVert_t  \\
&\ge \frac 1 D (t/\tau)^\lambda \lVert T(\tau, 1)z\rVert_\tau\\
&= \frac 1 D (t/\tau)^\lambda \lVert v\rVert_\tau,
\end{split}
\]
which readily implies that~\eqref{202} holds.
\end{proof}
  The final ingredient of the proof is the following lemma. 
\begin{lemma}
We have that
\begin{equation}\label{203}
\sup_{\tau \ge 1}\lVert P(\tau)\rVert <\infty. 
\end{equation}
\end{lemma}

\begin{proof}[Proof of the lemma]
For each $\tau \ge 1$, let
\[
\gamma (\tau):=\inf \{\lVert v^s+v^u\rVert_\tau: \lVert v^s\rVert_\tau=\lVert v^u\rVert_\tau=1, \ v^s\in S(\tau), \ v^u \in U(\tau)\}.
\]
Then (see~\cite[Lemma 4.2]{Sx2}), 
\begin{equation}\label{Pn}
\lVert P(\tau)\rVert \le \frac{2}{\gamma (\tau)}.
\end{equation}
Let us fix $v^s\in S(\tau)$  and $v^u \in U(\tau)$ such that $\lVert v^s\rVert_\tau=\lVert v^u\rVert_\tau=1$. It follows from~\eqref{pb} that for all $t\ge \tau$,
\[
\lVert T(t, \tau)(v^s+v^u)\rVert_t \le M(t/\tau)^a \lVert v^s+v^u\rVert_\tau,
\]
and thus by~\eqref{201} and~\eqref{202} we have that 
\begin{align}\label{iu}
\begin{split}
\lVert v^s+v^u\rVert_\tau &\ge \frac{1}{M(t/\tau)^a} \lVert T(t, \tau)(v^s+v^u)\rVert_t \\
&\ge \frac{1}{M(t/\tau)^a} \bigg{(}\lVert T(t, \tau)v^u\rVert_t-\lVert T(t, \tau)v^s\rVert_t \bigg{)} \\
&\ge \frac{1}{M(t/\tau)^a} \bigg{(}\frac 1 D (t/\tau)^\lambda -D(t/\tau)^{-\lambda} \bigg{)}. \\
\end{split}
\end{align}
Choose now $N_0\in \N$ such that 
\[
\frac 1 DN_0^\lambda -DN_0^{-\lambda}>0. 
\]
Hence, it follows from~\eqref{iu} (by taking $t=N_0\tau$) that 
\[
\lVert v^s+v^u\rVert_\tau \ge \frac{1}{MN_0^a} \bigg{(}\frac 1 DN_0^\lambda -DN_0^{-\lambda} \bigg{)}=:c>0.
\]
Therefore, $\gamma (\tau)\ge c$ and thus the conclusion of the lemma follows readily from~\eqref{Pn}.
\end{proof}
The conclusion of the theorem now follows directly from~\eqref{201}, \eqref{202} and~\eqref{203}.
\end{proof}

We  now discuss Theorems~\ref{t2} and~\ref{t1} in the particular case of polynomial contractions and expansions.  We say that an  evolution family $T(t,\tau)$  admits a \emph{polynomial contraction} with respect to the family of norms $\lVert \cdot \rVert_t$ if it admits a polynomial dichotomy with respect to the family  of norms $\lVert \cdot \rVert_t$ and with
projections $P(t)=\Id$, $t\ge 1$.

Similarly, we say that an evolution family  $T(t,\tau)$ admits a  \emph{polynomial expansion} with respect to the family of norms $\lVert \cdot \rVert_t$ if it admits a polynomial dichotomy with respect to the family of norms $\lVert \cdot \rVert_t$ and with
projections $P(t)=0$, $t\ge 1$.

The following two results are essentially  direct consequences of Theorems~\ref{t2} and~\ref{t1}.
\begin{theorem}
Assume that an evolution family $T(t,\tau)$ satisfies~\eqref{pb}  with $M,a>0$. 
The following two statements are equivalent:
\begin{itemize}
\item $T(t,\tau)$ admits a polynomial contraction with respect to the family of norms $\lVert \cdot \rVert_t$;
\item for each $y\in Y_1$, $x\colon [1, \infty) \to X$ defined by
\begin{equation}\label{xf}
x(t)=\int_1^t \frac 1 s T(t,s)y(s)\, ds \quad t\ge 1, 
\end{equation}
belongs to $Y_0$. 
\end{itemize}
\end{theorem}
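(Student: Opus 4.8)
The plan is to deduce both implications directly from Theorems~\ref{t2} and~\ref{t1}, using the observation that a polynomial contraction is exactly a polynomial dichotomy with $P(t)=\Id$ for all $t$. In that case $Q(t)=0$, so the associated subspace $Z=\Ima Q(1)$ of Theorem~\ref{t2} equals $\{0\}$, and correspondingly $Y_Z=Y_0$. Tracking this identification $Z=\{0\}\leftrightarrow P=\Id$ across both theorems is the one piece of bookkeeping that must be handled carefully; there is no genuine analytic obstacle, since all the estimates have already been carried out in the two preceding results.

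For the forward implication I would assume that $T(t,\tau)$ admits a polynomial contraction and apply Theorem~\ref{t2} with $Z=\{0\}$. This produces, for each $y\in Y_1$, a unique $x\in Y_0$ satisfying~\eqref{503a}. Setting $\tau=1$ in~\eqref{503a} and using $x(1)=0$ (valid since $x\in Y_0$) shows that this unique solution is precisely the function defined by~\eqref{xf}; hence~\eqref{xf} lies in $Y_0$. Equivalently, one can read the same conclusion straight off the construction in the proof of Theorem~\ref{t2}: with $Q=0$ one has $x_1\equiv 0$, while $x_2$ coincides with~\eqref{xf}.

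For the converse I would verify that the admissibility hypothesis of Theorem~\ref{t1} holds with $Z=\{0\}$. For existence, given $y\in Y_1$ let $x$ be the function~\eqref{xf}; by assumption $x\in Y_0$, and splitting $\int_1^t=\int_1^\tau+\int_\tau^t$ and using the evolution property $T(t,s)=T(t,\tau)T(\tau,s)$ to pull the operator $T(t,\tau)$ out of the first integral shows that $x$ satisfies~\eqref{503a}. For uniqueness, if $x\in Y_0$ satisfies~\eqref{503a}, then taking $\tau=1$ and using $x(1)=0$ forces $x$ to coincide with~\eqref{xf}. Thus for each $y\in Y_1$ there is a unique $x\in Y_0$ satisfying~\eqref{503a}, and since~\eqref{pb} is assumed, Theorem~\ref{t1} applies and yields a polynomial dichotomy.

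It then remains to check that this dichotomy is actually a contraction, and here I would invoke the construction of the projections in the proof of Theorem~\ref{t1}. The unstable subspace is $U(\tau)=T(\tau,1)Z=T(\tau,1)\{0\}=\{0\}$, so $Q(\tau)=0$ and $P(\tau)=\Id$ for every $\tau\ge 1$; this is by definition a polynomial contraction, which closes the equivalence. The step most worth writing out in detail is the verification that~\eqref{xf} solves~\eqref{503a} via the evolution property, but this is routine; the substance of the argument is entirely inherited from Theorems~\ref{t2} and~\ref{t1}.
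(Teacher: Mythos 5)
Your proposal is correct and takes essentially the same route as the paper: the forward implication via Theorem~\ref{t2} with $Z=\Ima Q(1)=\{0\}$, and the converse by checking the admissibility hypothesis of Theorem~\ref{t1} with $Z=\{0\}$, reading off $P(\tau)=\Id$ from $U(\tau)=T(\tau,1)\{0\}=\{0\}$ in that theorem's construction. The only difference is that you write out the bookkeeping (identifying the unique solution with~\eqref{xf} by setting $\tau=1$, and the contraction conclusion) that the paper's two-sentence proof leaves implicit.
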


\begin{proof}
By proceeding as in the proof of Theorem~\ref{t2}, it is easy to show that the first statement implies the second.  Conversely, under the assumption that the second statement is valid, we have that that the assumptions of Theorem~\ref{t1} are valid with $Z=\{0\}$ and thus the desired conclusion follows. 
\end{proof}

\begin{theorem}
Assume that an evolution family $T(t,\tau)$ satisfies~\eqref{pb}  with $M,a>0$. 
The following two statements are equivalent:
\begin{itemize}
\item $T(t,\tau)$ admits a polynomial expansion with respect to the family of norms $\lVert \cdot \rVert_t$;
\item for each $y\in Y_1$ there exists a unique $x\in Y_X$ satisfying~\eqref{503a}. 
\end{itemize}
\end{theorem}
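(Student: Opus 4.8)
The plan is to derive this theorem as an essentially direct corollary of Theorems~\ref{t2} and~\ref{t1}, specialized to the case $Z=X$. Recall that a polynomial expansion is by definition a polynomial dichotomy with projections $P(t)=0$, equivalently $Q(t)=\Id$, so that $\Ima Q(1)=X$. I would first establish the forward implication. Assuming $T(t,\tau)$ admits a polynomial expansion, we have $Z=\Ima Q(1)=X$, and Theorem~\ref{t2} applies verbatim: for each $y\in Y_1$ there exists a unique $x\in Y_Z=Y_X$ satisfying~\eqref{503a}. So this direction requires only the observation that $Y_X=\{x\in Y: x(1)\in X\}=Y$, which is immediate since $x(1)\in X$ is no constraint at all; thus $Y_X$ is simply the full space $Y$.

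For the converse, I would verify that the hypotheses of Theorem~\ref{t1} are met with $Z=X$. The admissibility assumption in the second statement is precisely the hypothesis of Theorem~\ref{t1} for the closed subspace $Z=X$. The remaining hypothesis of Theorem~\ref{t1} is the polynomial bounded growth condition~\eqref{pb}, which is assumed outright in the statement of the present theorem with constants $M,a>0$. Consequently Theorem~\ref{t1} yields that $T(t,\tau)$ admits a polynomial dichotomy with respect to $\lVert\cdot\rVert_t$. To upgrade this to a polynomial \emph{expansion}, I must check that the associated projections satisfy $P(\tau)=0$ for all $\tau\ge 1$.

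The one point that requires genuine (though short) verification is exactly this last identification of the projections produced by Theorem~\ref{t1}. In the proof of Theorem~\ref{t1}, the stable and unstable spaces are constructed as
\[
S(\tau)=\Bigl\{x\in X:\sup_{t\ge\tau}\lVert T(t,\tau)x\rVert_t<\infty\Bigr\}
\quad\text{and}\quad
U(\tau)=T(\tau,1)Z,
\]
with $P(\tau)$ the projection onto $S(\tau)$ along $U(\tau)$. Setting $Z=X$, the unstable space becomes $U(\tau)=T(\tau,1)X$; and since $X=S(\tau)\oplus U(\tau)$ by the decomposition lemma, together with the expansion estimate~\eqref{202} forcing $S(\tau)$ to be as small as possible, I would argue that $U(\tau)=X$ and hence $S(\tau)=\{0\}$, giving $P(\tau)=0$. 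This is the main obstacle, but it is mild: the cleanest route is to observe that the decomposition $X=S(\tau)\oplus U(\tau)$ with $U(\tau)=T(\tau,1)X$ together with injectivity of $T(t,\tau)\rvert_{U(\tau)}$ (the invertibility lemma) forces $U(\tau)=X$, since $T(\tau,1)\colon X\to U(\tau)$ is onto by construction and the complementary summand $S(\tau)$ must vanish.

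Finally, I would note that the symmetry with the preceding polynomial-contraction theorem is the natural sanity check: there $Z=\{0\}$ yields $P(t)=\Id$, while here $Z=X$ yields $P(t)=0$, and the admissibility formulation correspondingly drops the stable-projection integral term from~\eqref{503a}, leaving the full evolution equation with the requirement that the unique bounded solution exist in all of $Y_X=Y$. Thus the proof reduces to invoking Theorems~\ref{t2} and~\ref{t1} with $Z=X$ and performing the brief identification $P(\tau)=0$ described above; no new estimates are needed.
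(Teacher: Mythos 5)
Your reduction to Theorems~\ref{t2} and~\ref{t1} with $Z=X$ is exactly the paper's route (the paper's entire proof of this theorem is one sentence invoking those two results), and your forward direction is correct: an expansion has $Q(1)=\Id$, so Theorem~\ref{t2} gives unique solvability in $Y_{\Ima Q(1)}=Y_X=Y$. You are also right to flag that the converse needs more than a citation: Theorem~\ref{t1} with $Z=X$ only produces a dichotomy whose projections project onto $S(\tau)$ along $U(\tau)=T(\tau,1)X$, and to conclude an \emph{expansion} one must show $S(\tau)=\{0\}$, equivalently $U(\tau)=X$, for every $\tau\ge 1$. That identification is indeed the crux.

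However, your argument for it is a non sequitur. From injectivity of $T(\tau,1)$ (which does follow from uniqueness) and surjectivity of $T(\tau,1)\colon X\to U(\tau)$ (which is true by the very definition of $U(\tau)$) you infer $U(\tau)=X$; the phrase ``the complementary summand $S(\tau)$ must vanish'' is precisely the statement to be proved, and nothing in the quoted lemmas yields it. In an infinite-dimensional Banach space a bounded injective operator can map $X$ bijectively onto a proper closed complemented subspace --- the unilateral shift on $\ell^2$ does exactly this --- so the implicit dimension count is unavailable. Worse, the step genuinely fails in this generality: on $X=L^2([0,\infty))$ define $T(t,\tau)g=(t/\tau)^{-\lambda}g\chi_{[0,\ln \tau)}+(t/\tau)^{\lambda}\,(g\chi_{[\ln\tau,\infty)})(\cdot-\ln(t/\tau))$. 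This is an evolution family satisfying~\eqref{pb} (with constant norms) that admits a polynomial dichotomy with $P(\tau)=$ multiplication by $\chi_{[0,\ln\tau)}$, hence $P(1)=0$ but $P(\tau)\neq 0$ for $\tau>1$; Theorem~\ref{t2} then gives the admissibility statement with $Z=\Ima Q(1)=X$, yet no polynomial expansion exists, since $T(t,\tau)$ is not surjective (its range misses $L^2([\ln\tau,\ln t))$) and nonzero vectors supported in $[0,\ln\tau)$ have bounded forward orbits. So the identification $P(\tau)=0$ cannot be established in general; it does hold when $\dim X<\infty$, or under an additional hypothesis such as surjectivity of each $T(t,\tau)$. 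To be fair, the paper's own one-line proof silently skips the same point, so your instinct that something needed proving was sound --- but the proposed fix does not close the gap.
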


\begin{proof}
The conclusion of the theorem follows directly from Theorems~\ref{t2} and~\ref{t1}.
\end{proof}

We stress that it was proved in~\cite{BDV3} that the version of Theorem~\ref{t1} for classical exponential dichotomies holds without an assumption of the type~\eqref{pb}. Therefore, it is natural to ask if the conclusion of Theorem~\ref{t1} is valid in the absence of~\eqref{pb}. 
The following example shows that the answer to this question is negative. 
\begin{example}
Let $X=\mathbb R$ with the standard Euclidean norm $\lvert \cdot \rvert$. Furthermore, let $\lVert \cdot \rVert_t=\lvert \cdot \rvert$ for $t\ge 1$. We consider the sequence $(A_n)_{n\in \N}$ of operators (which can be identified with numbers) on $X$  given by
\[
A_n=\begin{cases}
n & \text{if $n=2^l$ for some $l\in \N$;}\\
0 & \text{otherwise.}
\end{cases}
\]
Furthermore, for $t\ge \tau \ge 1$ we define
\[
T(t, \tau)=\begin{cases}
A_{\lfloor t\rfloor-1} \cdots A_{\lfloor \tau \rfloor} & \text{if $\lfloor t\rfloor \ge \lfloor \tau \rfloor +1$;}\\
\Id & \text{if $\lfloor t\rfloor=\lfloor \tau \rfloor$.}
\end{cases}
\]
Clearly, $T(t, \tau)$ is an evolution family. It is easy to verify that for $y\in Y_1$, $x$ given by~\eqref{xf} satisfies
\[
x(t)=\begin{cases}
\int_{\lfloor t\rfloor}^t \frac{1}{s}y(s)\, ds +\int_{\lfloor t \rfloor -1}^{\lfloor t\rfloor}\frac{1}{s}A_{\lfloor t\rfloor -1}y(s)\, ds & \text{if $\lfloor t\rfloor=2^l+1$ for $l\in \N$;}\\
\int_{\lfloor t\rfloor}^t \frac{1}{s}y(s)\, ds & \text{otherwise.}
\end{cases}
\]
Hence, $x\in Y_0$ and in fact $\lVert x\rVert_\infty \le 2\lVert y\rVert_L$. However, $T(t, \tau)$ obviously doesn't admits a polynomial contraction since $\sup_{n\in \N}\lVert T(n+1, n)\rVert=\infty$.

\end{example}

\section{Nonuniform polynomial dichotomies}\label{NPD}
In this section we  recall the notion of a nonuniform exponential dichotomy and establish its connection with the notion of a polynomial dichotomy with respect to a family  of norms. 

We say that an evolution family  $T(t, \tau)$ admits a \emph{nonuniform polynomial dichotomy} if:
\begin{itemize}
\item there exist projections $P(t)$, $t\ge 1$ satisfying~\eqref{pro} and 
such that the map  $T(t,\tau)\rvert_{\Ker P(\tau)} \colon \Ker P(\tau)  \to \Ker P(t)$ is invertible for all $t\ge \tau \ge 1$;
\item there exist $\lambda, D>0$ and $\epsilon \ge 0$ such that for  $t, \tau\ge 1$ we have 
\begin{equation}\label{nd1}
\lVert T(t, \tau)P(\tau) \rVert \le D(t/\tau)^{-\lambda}\tau^\epsilon  \quad \text{for $t\ge \tau$}
\end{equation}
and 
\begin{equation}\label{nd2}
\lVert T(t,\tau)Q(\tau) \rVert \le  D(\tau/t)^{-\lambda}\tau^\epsilon  \quad \text{for $t\le \tau$,}
\end{equation}
where $Q(t)=\Id-P(t)$ and
\[
T(t,\tau)=(T(\tau, t)\rvert_{\Ker P(t)})^{-1} \colon \Ker P(\tau) \to \Ker P(t)
\]
for $t<\tau$.
\end{itemize}

\begin{proposition}\label{prop1}
The following properties are equivalent:
\begin{enumerate}
\item $T(t,\tau)$ admits a nonuniform polynomial dichotomy;
\item $T(t,\tau)$ admits a polynomial dichotomy with respect to a family  of norms $\lVert \cdot \rVert_t$ satisfying
\begin{equation}\label{ln}
\lVert x\rVert \le \lVert x\rVert_t \le Ct^\epsilon \lVert x\rVert \quad x\in X, \  t\ge 1
\end{equation}
for some $C>0$ and $\epsilon \ge 0$.
\end{enumerate}
\end{proposition}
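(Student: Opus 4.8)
The plan is to prove the two implications separately; the implication $(2)\Rightarrow(1)$ is routine, while $(1)\Rightarrow(2)$ requires the construction of a suitable family of adapted (Lyapunov) norms, and this is where the real work lies.

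For $(2)\Rightarrow(1)$ I would simply feed the bounds~\eqref{ln} into the dichotomy estimates~\eqref{d1} and~\eqref{d2}. For $t\ge\tau$ and $x\in X$ we have
\[
\lVert T(t,\tau)P(\tau)x\rVert\le\lVert T(t,\tau)P(\tau)x\rVert_t\le D(t/\tau)^{-\lambda}\lVert x\rVert_\tau\le DC\,\tau^\epsilon(t/\tau)^{-\lambda}\lVert x\rVert,
\]
which gives~\eqref{nd1} with constant $DC$ and the same rate $\lambda$; the estimate~\eqref{nd2} follows in the same way from~\eqref{d2}. The invertibility of $T(t,\tau)\rvert_{\Ker P(\tau)}$ and the commutation~\eqref{pro} belong to both definitions, so nothing further is needed.

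For $(1)\Rightarrow(2)$, fix any $\mu\in(0,\lambda]$ and, for $x\in X$ and $t\ge1$, define
\[
\lVert x\rVert_t:=\sup_{r\ge t}(r/t)^\mu\lVert T(r,t)P(t)x\rVert+\sup_{1\le r\le t}(t/r)^\mu\lVert T(r,t)Q(t)x\rVert.
\]
Taking $t=\tau$ in~\eqref{nd1} and~\eqref{nd2} yields $\lVert P(t)\rVert\le Dt^\epsilon$ and $\lVert Q(t)\rVert\le Dt^\epsilon$. Bounding the first supremum by~\eqref{nd1} (the weight $(r/t)^{\mu-\lambda}$ is at most $1$ for $r\ge t$) and the second by~\eqref{nd2}, one obtains $\lVert x\rVert_t\le 2D^2\,t^{2\epsilon}\lVert x\rVert$, while evaluating each supremum at $r=t$ gives $\lVert x\rVert_t\ge\lVert P(t)x\rVert+\lVert Q(t)x\rVert\ge\lVert x\rVert$ by the triangle inequality; after renaming $2\epsilon$ as $\epsilon$ this is exactly~\eqref{ln}. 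The decisive point is that these norms convert the nonuniform behaviour into uniform behaviour with $D=1$. Since $T(t,\tau)P(\tau)x=P(t)T(t,\tau)x\in\Ima P(t)$ for $t\ge\tau$, its unstable component vanishes, and using $T(r,t)T(t,\tau)=T(r,\tau)$ together with $(r/t)^\mu=(r/\tau)^\mu(\tau/t)^\mu$ and enlarging the range of the supremum from $r\ge t$ to $r\ge\tau$,
\[
\lVert T(t,\tau)P(\tau)x\rVert_t=(\tau/t)^\mu\sup_{r\ge t}(r/\tau)^\mu\lVert T(r,\tau)P(\tau)x\rVert\le(t/\tau)^{-\mu}\lVert x\rVert_\tau,
\]
which is~\eqref{d1} with $D=1$ and rate $\mu$. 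The symmetric computation on $\Ima Q(\tau)$, with the supremum restricted to $1\le r\le t$, gives~\eqref{d2}; the invertibility and~\eqref{pro} are inherited from $(1)$.

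The main obstacle is the measurability requirement (ii), namely that $t\mapsto\lVert x\rVert_t$ be measurable for each fixed $x$, since here the dependence on $t$ enters simultaneously through the range of the supremum, the weight $(r/t)^\mu$, the operator $T(r,t)$, and the projection $P(t)$. I would handle this by using the continuity of $r\mapsto T(r,t)x$ (property (iii)) to replace each supremum by a supremum over \emph{rational} $r$, thereby reducing the claim to measurability of the countable family of maps $t\mapsto\lVert T(r,t)P(t)x\rVert$ for fixed rational $r$; the one remaining ingredient is the measurability of $t\mapsto P(t)x$, which I expect to follow from the standing regularity of the dichotomy data (as is customary for nonuniform dichotomies) or, failing that, to be taken as part of the hypothesis. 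The estimates themselves are forced by the cocycle identity, so this measurability step is the only genuinely delicate part of the argument.
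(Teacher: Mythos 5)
Your proposal is correct and follows essentially the same route as the paper: the (2)$\Rightarrow$(1) implication is the identical two-line estimate, and for (1)$\Rightarrow$(2) your weighted-supremum norms coincide (taking $\mu=\lambda$) with the Lyapunov norms the paper constructs, with the same cocycle computation yielding \eqref{d1} and \eqref{d2} with constant $1$ (your constants $2D^2$, $2\epsilon$ in \eqref{ln} are merely lossier than the paper's $2D$, $\epsilon$, which is harmless). The measurability requirement you flag is genuine but is passed over silently in the paper as well --- its proof never verifies property (ii) of the definition of a family of norms --- so your explicit reduction to countable suprema is, if anything, more careful than the original.
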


\begin{proof}
Assume first that  $T(t, \tau)$ admits a nonuniform polynomial dichotomy. For each $\tau \ge 1$ and $x\in X$, let
\[
\lVert x\rVert_\tau:=\sup_{t\ge \tau} (\lVert  T(t,\tau)P(\tau) x\rVert (t/\tau)^\lambda)+\sup_{t\le \tau}(\lVert T(t, \tau)Q(\tau) x\rVert (\tau/t)^\lambda).
\]
It follows readily from~\eqref{nd1} and~\eqref{nd2} that~\eqref{ln} holds with $C=2D$. Furthermore, for $t\ge \tau$ and $x\in X$ we have that 
\[
\begin{split}
\lVert T(t, \tau)P(\tau) x\rVert_t &=\sup_{s\ge t} (\lVert T(s, t)T(t, \tau)P(\tau) x\rVert  (s/t)^\lambda) \\
&\le \sup_{s\ge \tau} (\lVert T(s, \tau)P(\tau) x\rVert  (s/t)^\lambda) \\
&=(t/\tau)^{-\lambda} \sup_{s\ge \tau}(\lVert T(s, \tau)P(\tau) x\rVert  (s/\tau)^\lambda) \\
&=(t/\tau)^{-\lambda} \lVert x\rVert_\tau,
\end{split}
\]
and thus~\eqref{d1} holds. Similarly, one can show that~\eqref{d2} holds. Therefore, $T(t,\tau)$ admits a polynomial dichotomy with respect to the family of norms $\lVert \cdot \rVert_t$.

Conversely, suppose that $T(t,\tau)$ admits a polynomial dichotomy with respect to a family of norms $\lVert \cdot \rVert_t$ satisfying~\eqref{ln} for some $C>0$ and $\epsilon \ge 0$. It follows that~\eqref{d1} and~\eqref{ln} that
\[
\begin{split}
\lVert T(t, \tau)P(\tau) x\rVert  &\le \lVert T(t, \tau)P(\tau) x\rVert_t  \\
& \le D(t/\tau)^{-\lambda}\lVert x\rVert_\tau \\
&\le CD(t/\tau)^{-\lambda}\tau^\epsilon \lVert x\rVert,
\end{split}
\]
for $t\ge \tau$ and $x\in X$. Therefore, \eqref{nd1} holds. Similarly, one can establish~\eqref{nd2} and therefore $T(t,\tau)$ admits a nonuniform polynomial dichotomy.
\end{proof}
However,  the norms $\lVert \cdot \rVert_t$ constructed in the proof of Proposition~\ref{prop1} can fail to satisfy~\eqref{pb}. 
Therefore, in order to be able to apply our main results, we will consider a stronger notion of a nonuniform polynomial dichotomy. 

We say that $T(t,\tau)$ admits a \emph{strong nonuniform polynomial dichtotomy} if it admits a nonuniform polynomial dichotomy and there exist $K, b>0$ such that
\[
\lVert T(t, \tau)\rVert \le K (t/\tau)^b n^\epsilon \quad \text{for $t\ge \tau$.}
\]

\begin{proposition}\label{12}
The following properties are equivalent:
\begin{enumerate}
\item $T(t,\tau)$ admits a strong  nonuniform polynomial dichotomy;
\item $T(t,\tau)$ admits a polynomial dichotomy with respect to a family of norms $\lVert \cdot \rVert_t$ satisfying~\eqref{pb} and~\eqref{ln}
for some $C, M, a>0$ and $\epsilon \ge 0$.
\end{enumerate}
\end{proposition}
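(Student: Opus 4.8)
The plan is to prove the two implications separately, using Proposition~\ref{prop1} to dispose of the projection structure and the dichotomy estimates, so that only the relationship between the growth condition~\eqref{pb} and the strong bound $\lVert T(t,\tau)\rVert\le K(t/\tau)^{b}\tau^{\epsilon}$ requires new work. For $(2)\Rightarrow(1)$ I would argue directly: by Proposition~\ref{prop1} a polynomial dichotomy with respect to a family of norms satisfying~\eqref{ln} already yields a nonuniform polynomial dichotomy, so it remains only to produce the strong growth bound. For $t\ge\tau$ and $x\in X$ this follows by chaining three estimates,
\[
\lVert T(t,\tau)x\rVert\le\lVert T(t,\tau)x\rVert_{t}\le M(t/\tau)^{a}\lVert x\rVert_{\tau}\le CM(t/\tau)^{a}\tau^{\epsilon}\lVert x\rVert,
\]
namely the lower bound in~\eqref{ln}, then~\eqref{pb}, then the upper bound in~\eqref{ln}; hence the strong bound holds with $K=CM$ and $b=a$.

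The heart of the matter is the converse $(1)\Rightarrow(2)$, where I must produce a family of norms that realizes the dichotomy and \emph{simultaneously} satisfies~\eqref{pb}. As the remark following Proposition~\ref{prop1} indicates, the two-term Lyapunov norm used there fails~\eqref{pb}: propagating an unstable vector forward and invoking the bound $\lVert T(t,\tau)Q(\tau)\rVert\le(K+D)(t/\tau)^{b}\tau^{\epsilon}$ for $t\ge\tau$ (obtained by writing $Q(\tau)=\Id-P(\tau)$ and combining the strong bound with~\eqref{nd1}) introduces a factor $\tau^{\epsilon}$ that the backward-oriented unstable term cannot absorb. My remedy is to append a third, forward-oriented term. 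Assuming without loss of generality that $b\ge\lambda$, I would keep the same projections $P(\tau)$ and set, for $\tau\ge1$ and $x\in X$,
\[
\lVert x\rVert_{\tau}:=\sup_{t\ge\tau}\lVert T(t,\tau)P(\tau)x\rVert(t/\tau)^{\lambda}+\sup_{t\le\tau}\lVert T(t,\tau)Q(\tau)x\rVert(\tau/t)^{\lambda}+\sup_{t\ge\tau}\lVert T(t,\tau)Q(\tau)x\rVert(\tau/t)^{b}.
\]
The first two summands are the stable and backward-unstable terms of Proposition~\ref{prop1}, responsible for~\eqref{d1} and~\eqref{d2}; the third, weighted by $(\tau/t)^{b}$, is the new ingredient controlling the forward expansion of unstable vectors.

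I would then complete the proof through four verifications. For~\eqref{ln}: the lower bound $\lVert x\rVert\le\lVert x\rVert_{\tau}$ comes from evaluating the first two suprema at $t=\tau$, and the upper bound $\lVert x\rVert_{\tau}\le C\tau^{\epsilon}\lVert x\rVert$ from inserting~\eqref{nd1}, \eqref{nd2} and the displayed bound on $\lVert T(t,\tau)Q(\tau)\rVert$ into the three suprema, each of which collapses to a single factor $\tau^{\epsilon}$. For~\eqref{d1} and~\eqref{d2}: using the evolution property I rewrite $\lVert T(t,\tau)P(\tau)x\rVert_{t}$ and $\lVert T(t,\tau)Q(\tau)x\rVert_{t}$ as suprema of the same orbit with shifted weights, so that the factors $(t/\tau)^{-\lambda}$ and $(\tau/t)^{-\lambda}$ factor out, the inequality $b\ge\lambda$ keeping the forward term harmless in the backward estimate. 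For~\eqref{pb}: after writing $x=P(s)x+Q(s)x$ and noting that $P(s)$ and $Q(s)$ have norm at most one in $\lVert\cdot\rVert_{s}$, the stable component contracts like $(t/s)^{-\lambda}$, while for the unstable component each of the three suprema defining $\lVert T(t,s)Q(s)x\rVert_{t}$ is dominated—after splitting the auxiliary variable at $s$ and again using $b\ge\lambda$—by a constant multiple of $(t/s)^{b}\lVert x\rVert_{s}$, giving~\eqref{pb} with $a=b$. Measurability of $\tau\mapsto\lVert x\rVert_{\tau}$ follows as usual from the continuity of the evolution family.

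I expect the verification of~\eqref{pb} for unstable vectors to be the main obstacle, and indeed it is what forces the departure from Proposition~\ref{prop1}. The delicate point is that the supremum defining the norm at the later time $t$ ranges over auxiliary times both below and above $s$; one must check that the contribution from times in $[s,t]$, where the orbit genuinely expands, is governed by the new forward term at $s$ rather than by the backward term. The weight $(\tau/t)^{b}$ is calibrated precisely so that the growth rate $b$ matches across the two time scales and the spurious factor $\tau^{\epsilon}$ cancels.
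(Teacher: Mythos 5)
Your proposal is correct and follows essentially the same route as the paper: the paper's proof uses exactly the same three-term Lyapunov norm (its third term is written with weight $(t/\tau)^{-b}$, which is identical to your $(\tau/t)^{b}$) for the direction $(1)\Rightarrow(2)$, and disposes of $(2)\Rightarrow(1)$ by the same chaining of~\eqref{ln} and~\eqref{pb} as in Proposition~\ref{prop1}. The only difference is one of detail: the paper leaves the verifications to the reader ("by repeating the arguments in the proof of Proposition~\ref{prop1}"), whereas you carry them out, including the normalization $b\ge\lambda$ that makes the forward unstable term compatible with~\eqref{d2} and~\eqref{pb}.
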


\begin{proof}
Assume that  $T(t,\tau)$ admits a strong  nonuniform polynomial dichotomy. For $\tau \ge 1$ and $x\in X$, set 
\[
\begin{split}
\lVert x\rVert_\tau &=\sup_{t\ge \tau} (\lVert T(t, \tau)P(\tau) x\rVert (t/\tau)^\lambda)+\sup_{t\le \tau}(\lVert T(t, \tau)Q(\tau) x\rVert (\tau/t)^\lambda)\\
&\phantom{=}+\sup_{t\ge \tau}(\lVert T(t, \tau)Q(\tau) x\rVert (t/\tau)^{-b}).
\end{split}
\]
By repeating the arguments in the proof of Proposition~\ref{prop1}, it is easy to verify that  $T(t,\tau)$ admits a polynomial dichotomy with respect to the family of norms $\lVert \cdot \rVert_t$ and that~\eqref{pb} and~\eqref{ln} holds. The converse can also be obtained by arguing as in the proof of Proposition~\ref{prop1}.
\end{proof}

\section{Robustness of strong nonuniform polynomial dichotomies}\label{R}
In this section we apply our main results to establish to prove that the notion of a strong nonuniform polynomial dichotomy persists under sufficiently small linear perturbations.

\begin{theorem}\label{qe}
Assume that the evolution family $T(t,\tau)$ admits a strong nonuniform polynomial dichotomy and that $B \colon [1, \infty) \to B(X)$ is a strongly continuous function such that 
\begin{equation}\label{robustness}
\lVert B(t) \rVert \le \frac{c}{t^{1+\epsilon}} \quad \text{for $t\ge 1$.}
\end{equation}
For any sufficiently small $c>0$, the evolution family $U(t,\tau)$ satisfying 
\[
U(t,\tau)=T(t,\tau)+\int_\tau^t T(t,s)B(s)U(s,\tau)\, ds
\]
admits a strong nonuniform polynomial dichotomy.

\end{theorem}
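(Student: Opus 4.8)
The plan is to reduce the robustness statement to the admissibility characterization by combining Proposition~\ref{12} with Theorems~\ref{t2} and~\ref{t1}, the entire argument taking place with respect to one fixed family of norms. First I would invoke Proposition~\ref{12} to replace the strong nonuniform polynomial dichotomy of $T(t,\tau)$ by a polynomial dichotomy with respect to a family of norms $\lVert\cdot\rVert_t$ satisfying both~\eqref{pb} (for $T$, with constants $M,a>0$) and~\eqref{ln} (with constant $C>0$ and the same $\epsilon\ge 0$ appearing in~\eqref{robustness}). The observation that makes all exponents match is that, using~\eqref{ln} and~\eqref{robustness}, for $s\ge 1$ and $x\in X$ one has $\lVert B(s)x\rVert_s\le Cs^\epsilon\lVert B(s)\rVert\,\lVert x\rVert\le (Cc/s)\lVert x\rVert_s$, so that the decay of $B$ and the nonuniformity of the norms combine into a clean factor $1/s$.

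Next I would show that $U(t,\tau)$ itself satisfies~\eqref{pb} with respect to $\lVert\cdot\rVert_t$. Writing $g(t)=\lVert U(t,\tau)x\rVert_t$ and applying~\eqref{pb} for $T$ to the defining integral equation for $U$, the bound above yields an integral inequality of the form
\[
g(t)\le M(t/\tau)^a\lVert x\rVert_\tau+MCc\int_\tau^t \frac{1}{s}(t/s)^a g(s)\,ds.
\]
Dividing through by $t^a$ and applying Gronwall's inequality produces $\lVert U(t,\tau)x\rVert_t\le M(t/\tau)^{a+MCc}\lVert x\rVert_\tau$, which is precisely~\eqref{pb} for $U$ with a slightly larger growth exponent.

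The heart of the proof is the transfer of the admissibility property from $T$ to $U$. Applying Theorem~\ref{t2} to $T$ gives, for $Z=\Ima Q(1)$, unique solvability of~\eqref{503a} for every $y\in Y_1$; equivalently, the operator $T_Z$ of the proof of Theorem~\ref{t1} has a bounded inverse $G_Z$. I would then establish that a continuous $x\in Y_Z$ solves the analogue of~\eqref{503a} for $U$ (with $U$ in place of $T$) if and only if it solves~\eqref{503a} for $T$ with forcing term $y$ replaced by $y+z$, where $z(s)=sB(s)x(s)$; this equivalence rests on the Volterra relation defining $U$ together with Fubini's theorem. The estimate above gives $\lVert z\rVert_L\le Cc\,\lVert x\rVert_\infty$, so $z\in Y_1$ and the map $x\mapsto G_Z(y+z)$ is a self-map of $Y_Z$ with Lipschitz constant at most $\lVert G_Z\rVert Cc$. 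For $c$ small enough that $\lVert G_Z\rVert Cc<1$ this is a contraction, whose unique fixed point is the unique $x\in Y_Z$ solving the $U$-version of~\eqref{503a}.

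Finally, since $U$ satisfies~\eqref{pb} and the admissibility property with respect to $\lVert\cdot\rVert_t$, Theorem~\ref{t1} shows that $U(t,\tau)$ admits a polynomial dichotomy with respect to $\lVert\cdot\rVert_t$; as these same norms also satisfy~\eqref{ln}, the converse direction of Proposition~\ref{12} yields that $U(t,\tau)$ admits a strong nonuniform polynomial dichotomy. I expect the main obstacle to be the equivalence of the two integral equations in the third step: justifying the interchange of the order of integration in the iterated integrals built from the Volterra relation for $U$, and checking that all the integrals involved converge in the relevant norms, so that the fixed-point reformulation is rigorously legitimate.
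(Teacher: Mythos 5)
Your proposal is correct and follows essentially the same route as the paper: passing to adapted norms via Proposition~\ref{12}, the key estimate $\lVert B(s)x\rVert_s\le (Cc/s)\lVert x\rVert_s$ obtained by combining \eqref{robustness} with \eqref{ln}, a Gronwall argument giving \eqref{pb} for $U$ with exponent $a+cCM$, the Fubini/Volterra computation identifying solutions of the $U$-version of \eqref{503a} with solutions of the $T$-version with forcing $y+Dx$ where $(Dx)(s)=sB(s)x(s)$, and finally Theorem~\ref{t1} together with Proposition~\ref{12} to conclude. The single point of divergence is how the solvability of the perturbed equation is obtained: the paper equips $\mathcal{D}(T_Z)$ with the graph norm (legitimate by Lemma~\ref{9:26}), observes that $U_Z=T_Z-D$ is a small bounded perturbation of the invertible bounded operator $T_Z$ on that Banach space, and invokes standard perturbation theory, whereas you rewrite the $U$-equation as the fixed-point problem $x=G_Z(y+Dx)$ on $Y_Z$ and apply the contraction mapping principle when $cC\lVert G_Z\rVert<1$. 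The two mechanisms are equivalent --- your iteration is precisely the Neumann series for $(\Id-G_ZD)^{-1}$ --- but your variant has the minor advantage of never needing the graph-norm structure on $\mathcal{D}(T_Z)$; conversely, the paper's formulation isolates the identity $T_Z=U_Z+D$ as a statement about operators, which makes the smallness hypothesis on $c$ appear exactly as an operator-norm condition. The obstacle you flag (justifying the interchange of integration order) is handled in the paper by exactly the computation you describe, so your plan is complete.
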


\begin{proof}
Since $T(t,\tau)$ admits a strong nonuniform polynomial dichotomy, it follows from Proposition~\ref{12} that there exists a family of norms $\lVert \cdot \rVert_t$, $t\ge 1$ such that~\eqref{pb} and~\eqref{ln} hold for some $C, M, a >0$ and with $\epsilon \ge 0$ as in the definition of the notion of a (strong) nounuiform exponential dichotomy. Moreover,
$T(t,\tau)$ admits a polynomial dichotomy with respect to the  family of norms $\lVert \cdot \rVert_t$. Hence, Theorem~\ref{t2} implies that there exists a closed subspace $Z\subset X$ such that the operator $T_Z \colon \mathcal D(T_Z)\to Y_1$ (defined in the proof  Theorem~\ref{t1}) is invertible.  Furthermore, Lemma~\ref{9:26} implies that $T_Z$ is closed. 
For $x\in \mathcal D(T_Z)$, we consider the graph norm $\lVert x\rVert_{T_Z}:=\lVert x\rVert_\infty+\lVert T_Zx\rVert_1$. Since $T_Z$ is closed, $(\mathcal D(T_Z), \lVert \cdot \rVert_{T_Z})$ is a Banach space. Moreover, the operator $T_Z\colon \mathcal D(T_Z)\to Y_1$  is bounded and from now on we denote it simply by $T_Z$.

We define $D\colon \mathcal D(T_Z)\to Y_1$ by
\[
(Dx)(t)=tB(t)x(t) \quad \text{for $t\ge 1$ and $x\in \mathcal D(T_Z)$.}
\]
It follows from~\eqref{ln} and~\eqref{robustness} that
\[
\begin{split}
\lVert Dx\rVert_L &=\sup_{t\ge 1}\int_t^{t+1}\lVert (Dx)(s)\rVert_s \, ds \\
&=\sup_{t\ge 1}\int_t^{t+1}s \lVert B(s) x(s)\rVert_s \, ds \\
&\le C\sup_{t\ge 1}\int_t^{t+1}s^{1+\epsilon} \lVert B(s) x(s)\rVert \, ds \\
&\le cC\sup_{t\ge 1}\int_t^{t+1}\lVert x(s)\rVert \, ds \\
&\le  cC\sup_{t\ge 1}\int_t^{t+1}\lVert x(s)\rVert_s \, ds \\
&\le cC\lVert x\rVert_\infty, 
\end{split}
\]
and thus
\begin{equation}\label{est}
\lVert Dx\rVert_L \le cC \lVert x\rVert_{T_Z} \quad \text{for $x\in \mathcal D(T_Z)$.}
\end{equation}

Moreover, we define a linear operator $U_Z \colon \mathcal D(U_Z)\to Y_1$ defined by $U_Z x=y$ on the domain $\mathcal D(U_Z)$ formed by all $x\in Y_Z$ for which there exists $y\in Y_1$ such that
\[
x(t)=U(t,\tau)x(\tau)+\int_\tau^t \frac 1 s U(t,s)y(s)\, ds \quad \text{for $t\ge \tau$.}
\]
\begin{lemma}
We have that
\begin{equation}\label{6:00}
\mathcal D(T_Z)=\mathcal D(U_Z)  \quad \text{and} \quad T_Z=U_Z+D.
\end{equation}
\end{lemma}

\begin{proof}[Proof of the lemma]
Take $x\in Y_Z$ and $y\in Y_1$ such that $U_Z x=y$. Then,
\[
\begin{split}
x(t)&=U(t,\tau)x(\tau)+\int_\tau^t \frac 1 s U(t,s)y(s)\, ds  \\
&=T(t,\tau)x(\tau)+\int_\tau^tT(t,s)B(s)U(s,\tau)x(\tau)\, ds \\
&\phantom{=}+\int_\tau^t \frac 1 s T(t,s)y(s)\, ds+\int_\tau^t \int_s^t \frac 1 s T(t, w)B(w)U(w,s)y(s)\, dw \, ds \\
&=T(t,\tau)x(\tau)+\int_\tau^tT(t,w)B(w)U(w,\tau)x(\tau)\, dw \\
&\phantom{=}+\int_\tau^t \frac 1 s T(t,s)y(s)\, ds+\int_\tau^t \int_\tau^w \frac 1 s T(t, w)B(w)U(w,s)y(s)\, ds \, dw \\
&=T(t,\tau)x(\tau)+\int_\tau^t \frac 1 s T(t,s)y(s)\, ds\\
&\phantom{=}+\int_\tau^t T(t, w)B(w)\bigg{(}U(w,\tau)x(\tau)+\int_\tau^w \frac 1 s U(\omega, s)y(s)\, ds \bigg{)}\, dw \\
&=T(t,\tau)x(\tau)+\int_\tau^t \frac{1}{w}T(t,w)\bigg{(}y(w)+wB(w)x(w)\bigg{)}\, dw,
\end{split}
\]
for $t\ge \tau$. Therefore, $x\in \mathcal D(T_Z)$ and $T_Zx=U_Zx+Dx$. Reversing the arguments, we also obtain that $\mathcal D(T_Z)\subset \mathcal D(U_Z)$ and the proof of the lemma is completed. 
\end{proof}
It follows from~\eqref{est} and~\eqref{6:00} that 
\begin{equation}\label{6:05}
\lVert (U_Z-T_Z)x\rVert_L=\lVert Dx\rVert_L \le cC\lVert x\rVert_\infty \le  cC\lVert x\rVert_{T_Z}
\end{equation}
for $x\in \mathcal D(T_Z)$. Hence, the linear operator
\[
U_Z \colon (\mathcal D(T_Z), \lVert \cdot \rVert_{T_Z}) \to Y_1
\]
is bounded. Moreover, it follows from~\eqref{6:05} and  the invertibility of $T_Z$ that if $c$ is sufficiently small, then $U_Z$ is also invertible.  

On the other hand,  it follows from~\eqref{pb}, \eqref{ln} and~\eqref{robustness} that
\[
\begin{split}
\lVert U(t,\tau)x\rVert_t &=\bigg{\lVert} T(t, \tau)x+\int_\tau^t T(t,s)B(s)U(s, \tau)x\, ds \bigg{\rVert}_t  \\
&\le M(t/\tau)^a \lVert x\rVert_\tau+\int_\tau^t cCM(t/s)^a \frac 1 s  \lVert U(s, \tau)x\rVert_s\, ds 
\end{split}
\]
for $t\ge \tau$. Hence, the function $\phi(t)=t^{-a}\lVert U(t,\tau)x\rVert_t$ satisfies
\[
\phi(t)\le M\phi(\tau)+cCM\int_\tau^t \frac 1 s\phi(s)\, ds.
\]
Therefore, it follows from Gronwall lemma that 
\[
\phi(t)\le M\phi(\tau)(t/\tau)^{cCM},
\]
and thus
\begin{equation}\label{845}
\lVert U(t,\tau)x\rVert_t \le M(t/\tau)^{a+cCM} \lVert x\rVert_\tau \quad \text{for $t\ge \tau$ and $x\in X$.}
\end{equation}
Since $U_Z$ is invertible and~\eqref{845} holds, it follows from Theorem~\ref{t1} that $U(t,\tau)$ admits a polynomial dichotomy with respect to the  family of norms $\lVert \cdot\rVert_t$. It then follows from Proposition~\ref{12}
that $U(t,\tau)$ admits a nonuniform polynomial dichotomy and the proof of the theorem is completed.

\end{proof}

\begin{remark}
Although the robustness property of nonuniform polynomial dichotomy follows from more general results established in~\cite{BV2}, the approach we give is new and can be seen as a natural extension of the treatment of the robustness property for exponential dichotomies. 
Besides this, it should be noted that our condition for robustness (see~\eqref{robustness}) is weaker than the one required in~\cite[Theorem 1.]{BV2} (in the particular case of polynomial dichotomies).
\end{remark}
\bibliographystyle{amsplain}

\end{document}